\documentclass{amsart}

\usepackage{amsmath,amssymb,amsthm,a4wide}
\usepackage{pgfplots}
\setlength\parindent{0pt}

\usepackage{graphicx,tikz}

\newtheorem*{thm}{Theorem}

\newtheorem{corollary}{Corollary}

\newtheorem*{lem}{Lemma}

\theoremstyle{definition}

\theoremstyle{remark}

\newcommand{\vol}{\operatorname{vol}}

%    Blank box placeholder for figures (to avoid requiring any
%    particular graphics capabilities for printing this document).

\begin{document}

\title[]{localized quantitative criteria for equidistribution}
\keywords{Uniform distribution, pair correlation, Jacobi theta function, crystallization.}
\subjclass[2010]{11K06 and 42A16 (primary), 42A82 and 94A11 (secondary)} 

\author[]{Stefan Steinerberger}
\address{Department of Mathematics, University of Washington, Seattle, WA 98195, USA}
\email{steinerb@uw.edu}

\begin{abstract}  Let $(x_n)_{n=1}^{\infty}$ be a sequence on the torus $\mathbb{T}$ (normalized to length 1). We show that if
there exists a sequence of positive real numbers $(t_n)_{n=1}^{\infty}$ converging to 0 such that
 $$ \lim_{N \rightarrow \infty}{ \frac{1}{N^2} \sum_{m,n = 1}^{N}{  \frac{1}{\sqrt{t_N}} \exp{\left(- \frac{1}{t_N} (x_m - x_n)^2 \right)}} } 
 = \sqrt{\pi},$$
then $(x_n)_{n=1}^{\infty}$ is uniformly distributed. This is especially interesting when $t_N$ is close to $N^{-2}$ since the size of the sum is then mostly determined by local gaps at scale $\sim N^{-1}$. A similar argument can then be used to show equidistribution of sequences
with Poissonian pair correlation, which recovers a recent result of  Aistleitner, Lachmann \& Pausinger and Grepstad \& Larcher. The general
form of the result is proven on arbitrary compact manifolds $(M,g)$ where the role of the exponential function is played by
the heat kernel $e^{t\Delta}$: for all $x_1, \dots, x_N \in M$ and all $t>0$
$$ \frac{1}{N^2}\sum_{m,n=1}^{N}{[e^{t\Delta}\delta_{x_m}](x_n)} \geq \frac{1}{\vol(M)}$$
and equality is attained as $N \rightarrow \infty$ if and only if $(x_n)_{n=1}^{\infty}$ equidistributes.
\end{abstract}

\maketitle

\section{Introduction}

\subsection{Introduction.} 
 Let $(x_n)_{n=1}^{\infty}$ be a sequence on $[0,1]$. A naturally associated object of interest is the behavior of gaps on a local scale. If the sequence is comprised of independently and uniformly distributed random variables, then 
 $$ \lim_{N \rightarrow \infty}{ \frac{1}{N} \# \left\{ 1 \leq m \neq n \leq N: |x_m - x_n| \leq \frac{s}{N} \right\}} = 2s \qquad \mbox{almost surely.}$$ 
 Whenever a deterministic sequence $(x_n)_{n=1}^{\infty}$ has the same property, we say it has Poissonian pair correlation; this notion of pseudorandomness has been intensively investigated, see e.g. \cite{heath, rud1, rud2, rud3}.
Recently,  Aistleitner, Lachmann \& Pausinger \cite{aist} and Grepstad \& Larcher \cite{grep} independently established that sequences with Poissonian pair correlation are uniformly distributed on $[0,1]$.
 \begin{thm}[Aistleitner-Lachmann-Pausinger \cite{aist}, Grepstad-Larcher \cite{grep}]  Let $(x_n)_{n=1}^{\infty}$ be a sequence on $[0,1]$ and assume that for all $s>0$
  $$ \lim_{N \rightarrow \infty}{ \frac{1}{N} \# \left\{ 1 \leq m \neq n \leq N: |x_m - x_n| \leq \frac{s}{N} \right\}} = 2s \qquad \mbox{a.s.},$$ 
  then the sequence is uniformly distributed.
 \end{thm}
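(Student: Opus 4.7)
The strategy is to reduce this theorem to the Gaussian criterion stated in the abstract: it suffices to exhibit a sequence $t_N \to 0$ with
$$ \frac{1}{N^2}\sum_{m,n=1}^{N} \frac{1}{\sqrt{t_N}}\exp\!\left(-\frac{(x_m-x_n)^2}{t_N}\right) \longrightarrow \sqrt{\pi}. $$
The natural scale is $t_N \sim N^{-2}$, so I would take $t_N = \omega(N)/N^2$ for a sequence $\omega(N) \to \infty$ of slow growth, to be fixed at the end by a diagonal argument.

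Split the double sum into its diagonal and off-diagonal contributions. The diagonal equals $(N\sqrt{t_N})^{-1} = \omega(N)^{-1/2}$, which tends to $0$. To handle the off-diagonal, introduce the counting function
$$ F_N(s) := \frac{1}{N}\#\{1 \le m \neq n \le N : N|x_m-x_n| \le s\}, $$
so that Poissonian pair correlation reads $F_N(s) \to 2s$ for each fixed $s>0$. A direct rescaling rewrites the off-diagonal part as the Stieltjes integral
$$ \frac{1}{\sqrt{\omega(N)}}\int_0^\infty e^{-s^2/\omega(N)}\, dF_N(s). $$
Integration by parts (boundary terms vanish provided $\omega(N)=o(N^2/\log N)$) followed by the substitution $u=s/\sqrt{\omega(N)}$ converts this to
$$ \int_0^\infty 2u\,e^{-u^2}\,\frac{F_N(\sqrt{\omega(N)}\,u)}{\sqrt{\omega(N)}}\,du, $$
which, if one could freely substitute $F_N(s) \mapsto 2s$, would equal $\int_0^\infty 4u^2 e^{-u^2}\,du = \sqrt{\pi}$, exactly as required.

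The main obstacle is that Poissonian pair correlation is a collection of pointwise statements in $s$, whereas the Gaussian integrates over all scales simultaneously up to the diverging window $\sqrt{\omega(N)}$. Since each $F_N$ is monotone and the limit $s \mapsto 2s$ is continuous, pointwise convergence upgrades to uniform convergence on every compact interval $[0,S]$. A diagonal extraction therefore yields an $\omega(N) \to \infty$ slow enough that
$$ \sup_{0 \le u \le \sqrt{\log \omega(N)}} \left| \frac{F_N(\sqrt{\omega(N)}\,u)}{\sqrt{\omega(N)}} - 2u \right| \longrightarrow 0, $$
while the Gaussian weight $e^{-u^2}$ together with the crude bound $F_N(s) \le N$ controls the tail. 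This makes the formal calculation rigorous, gives the limit $\sqrt{\pi}$, and invoking the Gaussian criterion from the abstract concludes that $(x_n)_{n=1}^\infty$ is uniformly distributed.
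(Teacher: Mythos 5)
Your overall strategy is exactly the one the paper uses: invoke the Gaussian equidistribution criterion with $t_N=\omega(N)/N^2$, kill the diagonal contribution by taking $\omega(N)\to\infty$, and fix $\omega(N)$ by a diagonal argument against the rate of pair-correlation convergence. The difference is purely technical: the paper approximates $e^{-z^2}$ by a finite, compactly supported step function and sums the pair-correlation counts at finitely many thresholds $b_k$, whereas you write the off-diagonal sum as the Stieltjes integral $\tfrac{1}{\sqrt{\omega}}\int_0^\infty e^{-s^2/\omega}\,dF_N(s)$ and integrate by parts. These are two implementations of the same idea, and neither buys anything beyond the other for this statement. (One small aside: the paper only assumes the limit relation for integer $s$, which its step-function version exploits directly; your Stieltjes version uses all $s>0$ as in the theorem statement, which is fine here.)

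There is, however, a concrete gap in the tail estimate as you have written it, and it is worth naming because the crude bound you invoke does not suffice at the window you chose. With $T=\sqrt{\log\omega(N)}$ the tail $\int_T^\infty 2u e^{-u^2}\,F_N(u\sqrt{\omega})/\sqrt{\omega}\,du$ is bounded, using $F_N\le N$, by $(N/\sqrt{\omega})e^{-T^2}=N/\omega^{3/2}$, which tends to $0$ only if $\omega(N)\gg N^{2/3}$. But the whole point of the diagonal extraction is that $\omega(N)$ may have to grow arbitrarily slowly, so this does not close. You actually need $T\gtrsim\sqrt{\log N}$ to beat the factor $N$, and then the uniform-convergence window must cover $s\le T\sqrt{\omega}\gtrsim\sqrt{\omega\log N}$. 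The right bookkeeping is: the main-term error is $\sup_{s\le T\sqrt{\omega}}|F_N(s)-2s|\big/\sqrt{\omega}$, so one only needs that supremum to be $o(\sqrt{\omega})$, not $o(1)$; together with $T\sim\sqrt{\log N}$ and the choice $\omega=\omega(N)$ small enough that $\sqrt{\omega\log N}$ stays inside the window guaranteed by Pólya's theorem and the diagonal argument, the estimate goes through. Also, the parenthetical claim that the integration-by-parts boundary term requires $\omega(N)=o(N^2/\log N)$ is not needed: for fixed $N$ the function $F_N$ is bounded (indeed eventually constant), so $e^{-s^2/\omega}F_N(s)\to 0$ as $s\to\infty$ unconditionally. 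For what it is worth, the paper's own write-up is equally terse at this tail step (and its displayed error bound $\varepsilon/N$ appears to be off by a factor of $N^2$), so you are in good company; but if you want your sketch to compile into a proof you should redo the window and tail bookkeeping along the lines above.
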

 An intuitive explanation is that any type of clustering produces many pairs $(x_m,x_n)$ which are close to each other -- the two available proofs are very different; the proof in \cite{grep} also implies a quantitative estimate on discrepancy. The purpose of this paper is to embed this result into an entire family of criteria that imply uniform distribution -- this family of criteria strenghten the result cited above
and are applicable on general compact manifolds.

\subsection{Qualitative Results on the Torus.} We start by formulating our result in the special case of the one-dimensional torus $\mathbb{T}$ (normalized to have length 1): let
  $$ \theta_t(x) = 1 + 2 \sum_{n=1}^{\infty}{e^{-4 \pi^2 n^2 t}\cos{(2 \pi n x)}} \qquad \mbox{denote the Jacobi $\theta-$function.}$$
 We observe that $\theta_t(x) \geq 0$, $\theta_t(x)=\theta_t(1-x)$, 
  $$ \int_{0}^{1}{\theta_t(x)dx} = 1 \qquad \mbox{and} \qquad   \theta_{t}(x) \sim \begin{cases} 1/\sqrt{t} \qquad &\mbox{for}~ |x| \lesssim \sqrt{t} \\ 0 \qquad &\mbox{otherwise.} \end{cases}$$
It looks roughly like a Gaussian centered at 0 with standard deviation $\sim t^{1/2}$ (the profile indeed converges to that of a Gaussian as $t \rightarrow 0$). Note that the property $\theta_t(x)=\theta_t(1-x)$ implies that we never have to distinguish
between points on the unit interval $[0,1]$ and points on the torus $\mathbb{T}$ of length 1.
Our main result, when applied to the one-dimensional torus $\mathbb{T}$, can be stated as follows.
 \begin{corollary}
 Let $(x_n)_{n=1}^{\infty}$ be a sequence on $\mathbb{T}$. If there exists a sequence $(t_n)_{n=1}^{\infty}$ of positive and bounded real numbers such that 
 $$ \lim_{N \rightarrow \infty}{ \frac{1}{N^2} \sum_{m,n = 1}^{N}{\theta_{t_N}(x_n - x_m)}} = 1,$$
 then $(x_n)_{n=1}^{\infty}$ is uniformly distributed on $\mathbb{T}$. 
 \end{corollary}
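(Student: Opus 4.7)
The plan is to translate the hypothesis into a statement about Weyl sums by expanding everything in Fourier series on $\mathbb{T}$, and then conclude via Weyl's criterion. The Jacobi theta function is (up to normalization) the heat kernel on the unit torus, so it has the very clean Fourier representation
$$ \theta_t(x) = \sum_{k \in \mathbb{Z}} e^{-4\pi^2 k^2 t} e^{2 \pi i k x},$$
and this is the only analytic input one needs. The fact that $\theta_t$ is a sum of squares of exponentials is precisely what will turn the double sum over $(m,n)$ into a sum of non-negative quantities indexed by frequency.

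First I would substitute the Fourier series into the double sum and interchange summations. Writing the normalized Weyl sums as
$$ S_N(k) = \frac{1}{N} \sum_{n=1}^{N} e^{2\pi i k x_n}, $$
the identity $\sum_{m,n=1}^N e^{2\pi i k(x_n-x_m)} = N^2 |S_N(k)|^2$ gives
$$ \frac{1}{N^2} \sum_{m,n=1}^N \theta_{t_N}(x_n - x_m) \;=\; \sum_{k \in \mathbb{Z}} e^{-4\pi^2 k^2 t_N} |S_N(k)|^2 \;=\; 1 + 2\sum_{k=1}^\infty e^{-4\pi^2 k^2 t_N} |S_N(k)|^2,$$
since $S_N(0)=1$ and $|S_N(-k)|=|S_N(k)|$. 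The hypothesis that this expression tends to $1$ is therefore equivalent to
$$ \sum_{k=1}^\infty e^{-4\pi^2 k^2 t_N}\,|S_N(k)|^2 \;\longrightarrow\; 0 \qquad (N \to \infty).$$

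Next I would use that every summand is non-negative and that $(t_N)$ is bounded. Fix any $k \geq 1$. Since the $k$-th term is dominated by the whole tail, it tends to $0$; since $t_N \leq T$ for some $T < \infty$, the Gaussian weight satisfies $e^{-4\pi^2 k^2 t_N} \geq e^{-4\pi^2 k^2 T} > 0$, uniformly in $N$. Dividing through yields $|S_N(k)| \to 0$ for every $k \neq 0$, which is Weyl's criterion and hence implies that $(x_n)_{n=1}^\infty$ is uniformly distributed on $\mathbb{T}$.

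The argument is essentially mechanical once the Fourier expansion is written down, and I do not foresee a genuine obstacle: the only point that requires a moment of care is the uniform-in-$N$ positive lower bound on $e^{-4\pi^2 k^2 t_N}$ for each fixed $k$, which is exactly where the boundedness of the sequence $(t_N)$ enters (if $t_N$ were allowed to blow up, the weights in front of low frequencies could degenerate and the argument would break). Note that the proof makes no use of $t_N$ being small; the interest of the small-$t_N$ regime, stressed in the introduction, lies in the local geometric interpretation of the double sum rather than in the proof of the qualitative statement.
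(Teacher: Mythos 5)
Your proof is correct. The computation
$$\frac{1}{N^2}\sum_{m,n=1}^{N}\theta_{t_N}(x_n-x_m)=\sum_{k\in\mathbb{Z}}e^{-4\pi^2k^2t_N}\,|S_N(k)|^2=1+2\sum_{k\geq 1}e^{-4\pi^2k^2t_N}\,|S_N(k)|^2$$
is exactly the "warming up" identity the paper records for the monotonicity-in-$t$ statement, and your next two steps (non-negativity of each term forces each fixed-$k$ term to vanish; the uniform lower bound $e^{-4\pi^2k^2t_N}\geq e^{-4\pi^2k^2T}$ coming from boundedness of $(t_N)$ then forces $|S_N(k)|\to 0$, so Weyl's criterion applies) give a clean, direct proof of the corollary. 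However, this is \emph{not} how the paper argues: the paper treats the corollary purely as a special case of its Theorem on general compact manifolds $(M,g)$, deducing it from $[e^{t\Delta}\delta_x](y)=\theta_t(x-y)$ on $\mathbb{T}$. The manifold proof has no Weyl's criterion available (no group structure, no exponential characters), so it proceeds by contradiction: it reduces to the constant time $t_N=C$ via monotonicity, extracts a ball $B$ carrying an excess of $\varepsilon_0N$ points, heat-flows the empirical measure for a short time $t_0$ to get a quantitative $L^2$ deviation from the constant $1/\mathrm{vol}(M)$, uses the gradient bound $\|\nabla e^{t_0\Delta}\frac1N\sum\delta_{x_m}\|_{L^2}\lesssim 1$ to show the low-frequency Laplace eigenmodes must carry a fixed fraction of that $L^2$ mass, and pushes that lower bound forward to time $C$ via the semigroup — contradicting the assumed limit. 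The trade-off: your route is shorter and essentially mechanical, but lives and dies with the Fourier structure of $\mathbb{T}$; the paper's route is far heavier but yields the statement on an arbitrary compact manifold with no extra effort. On $\mathbb{T}$ your argument is the natural one, and it is worth noting that it transparently isolates where boundedness of $(t_N)$ is used — precisely as you remark at the end.
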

If the sequence $(t_n)_{n=1}^{\infty}$ additionally converges to 0, then we can simplify the criterion by replacing the Jacobi $\theta-$function
with a suitable scaled Gaussian: indeed, if $t_N \rightarrow 0$, then
 $$ \lim_{N \rightarrow \infty}{ \frac{1}{N^2} \sum_{m,n = 1}^{N}{  \frac{1}{\sqrt{t_N}} \exp{\left(- \frac{1}{t_N} (x_m - x_n)^2 \right)}} } 
 = \sqrt{\pi}$$
immediately implies
$$  \lim_{N \rightarrow \infty}{ \frac{1}{N^2} \sum_{m,n = 1}^{N}{\theta_{t_N}(x_n - x_m)}} = 1.$$
\begin{center}
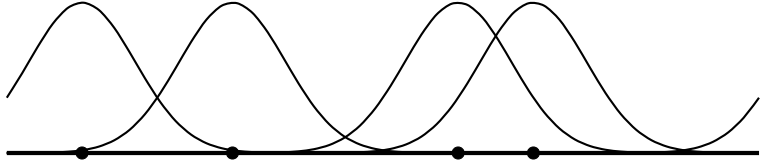
\begin{figure}[h!]
\begin{tikzpicture}
\draw [ultra thick] (0,0) -- (10,0);
\filldraw (1,0) circle (0.08cm);
\filldraw (3,0) circle (0.08cm);
\filldraw (6,0) circle (0.08cm);
\filldraw (7,0) circle (0.08cm);
 \draw[scale=1,thick, samples=50,domain=0:10,smooth,variable=\x]  plot ({\x},{2*exp(-(\x-1)*(\x-1))});
 \draw[scale=1,thick, samples=50,domain=0:10,smooth,variable=\x]  plot ({\x},{2*exp(-(\x-11)*(\x-11))});
 \draw[scale=1,thick, samples=50,domain=0:10,smooth,variable=\x]  plot ({\x},{2*exp(-(\x-3)*(\x-3))});
 \draw[scale=1,thick, samples=50,domain=0:10,smooth,variable=\x]  plot ({\x},{2*exp(-(\x-6)*(\x-6))});
 \draw[scale=1,thick, samples=50,domain=0:10,smooth,variable=\x]  plot ({\x},{2*exp(-(\x-7)*(\x-7))});
\end{tikzpicture}
\caption{Highly localized Gaussians evaluated at neighboring points.}
\end{figure}
\end{center}
\vspace{-10pt}
It is not difficult to see (and will be established as part of our argument) that for any $0 < s < t$ and any $x_1, x_2, \dots, x_N \in \mathbb{T}$
$$ \frac{1}{N^2} \sum_{m,n = 1}^{N}{\theta_{s}(x_n - x_m)} \geq  \frac{1}{N^2} \sum_{m,n = 1}^{N}{\theta_{t}(x_n - x_m)} \geq 1.$$
This shows that the criterion becomes more restrictive if the sequence of scales $(t_n)_{n=1}^{\infty}$ is made smaller. Conversely,
if that sequence is taken to be the constant sequence, $t_n = 1$ for all $n\in \mathbb{N}$, the criterion becomes sharp and characterizes uniform distribution.

\begin{corollary} A sequence $(x_n)_{n=1}^{\infty}$ is uniformly distributed on $[0,1]$ or $\mathbb{T}$ if and only if
 $$ \lim_{N \rightarrow \infty}{ \frac{1}{N^2} \sum_{m,n = 1}^{N}{\theta_{1}(x_n - x_m)}} = 1.$$
\end{corollary}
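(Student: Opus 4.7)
The plan is to reduce the criterion to Weyl's classical criterion for equidistribution by expanding the Jacobi $\theta$-function into its Fourier series. Substituting the series
$$\theta_t(x) = 1 + 2\sum_{k=1}^{\infty} e^{-4\pi^2 k^2 t}\cos(2\pi k x)$$
into the double sum and using the identity $\sum_{m,n=1}^N \cos(2\pi k(x_n-x_m)) = \bigl|\sum_{n=1}^N e^{2\pi i k x_n}\bigr|^2$, I would obtain the clean identity
$$\frac{1}{N^2}\sum_{m,n=1}^{N}\theta_t(x_n-x_m) = 1 + 2\sum_{k=1}^{\infty} e^{-4\pi^2 k^2 t}\, |S_N(k)|^2, \qquad S_N(k):=\frac{1}{N}\sum_{n=1}^N e^{2\pi i k x_n}.$$
Justifying the interchange of summation is routine: for $t=1$ the weights $e^{-4\pi^2 k^2}$ decay superexponentially, so Fubini/absolute convergence applies uniformly in $N$.

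Having established this identity for $t=1$, the corollary reduces to a statement about the sequence of Weyl sums $|S_N(k)|^2$. For the forward direction, if $(x_n)$ is uniformly distributed then Weyl's criterion gives $|S_N(k)|^2 \to 0$ for each $k \geq 1$, and since $|S_N(k)|^2 \leq 1$ we may apply dominated convergence (with the summable majorant $e^{-4\pi^2 k^2}$) to conclude that the sum tends to $0$, hence the full expression tends to $1$.

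For the converse, assume the limit equals $1$. Then
$$\sum_{k=1}^{\infty} e^{-4\pi^2 k^2}\, |S_N(k)|^2 \longrightarrow 0.$$
Since every summand is non-negative, for any fixed $k_0 \geq 1$ we have
$$e^{-4\pi^2 k_0^2}\, |S_N(k_0)|^2 \leq \sum_{k=1}^{\infty} e^{-4\pi^2 k^2}\, |S_N(k)|^2 \longrightarrow 0,$$
so $|S_N(k_0)|\to 0$ for every $k_0 \geq 1$. Weyl's criterion then yields equidistribution.

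\textbf{Main obstacle.} There is no serious obstacle here; the only subtle point is the legitimacy of swapping the Fourier series with the finite sum, which is immediate from the rapid Gaussian decay of the coefficients, and the passage from the total-sum convergence to the coefficient-wise convergence in the converse direction, which is trivial only because the positivity $|S_N(k)|^2 \geq 0$ makes each term dominated by the tail. The whole argument is really just a Plancherel-style repackaging of Weyl's criterion, with the positivity of $\theta_1$ encoding the non-negativity of the spectral weights.
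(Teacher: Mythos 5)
Your proof is correct, and it is worth pointing out that the identity you derive,
$$\frac{1}{N^2}\sum_{m,n=1}^{N}\theta_t(x_n-x_m) = 1 + 2\sum_{k=1}^{\infty} e^{-4\pi^2 k^2 t}\,\bigl|S_N(k)\bigr|^2,$$
is precisely the one the paper records in its ``Warming up'' subsection (there written via Plancherel, with $a_\ell$ playing the role of your $S_N(\ell)$). However, the paper only uses this identity to establish monotonicity in $t$; it does \emph{not} prove the corollary this way. Instead the paper derives Corollary 2 as a special case of the general Theorem on compact manifolds, whose proof goes through the heat semigroup, spectral theorem for $-\Delta_g$, and a compactness estimate $e^{\delta\Delta}: L^1 \to H^1$, precisely because on a general $(M,g)$ there is no translation structure, hence no Fourier transform and no Weyl criterion to fall back on. Your route is the natural one for the torus: it reduces directly to Weyl's criterion, is shorter and fully transparent, and in fact is the Bochner--Herglotz--type argument that the paper explicitly alludes to right after stating the corollary (``we could replace $\theta_1$ by any $\phi$ with positive Fourier coefficients; this result is related to the classical Bochner--Herglotz theorem''). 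What your approach gives up is generality: it requires a group structure on the underlying space and does not extend to the paper's main setting of arbitrary compact Riemannian manifolds. Both directions of your argument (dominated convergence with majorant $e^{-4\pi^2 k^2}$ for the forward implication, termwise positivity for the converse) are sound, so there is no gap.
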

There is nothing special about $t=1$ and the result holds if $\theta_1$ is replaced by $\theta_c$ for any fixed $c > 0$.
 Our proof gives a little bit more information and shows
that we could replace $\theta_1$ by any function $\phi \in C^{\infty}(\mathbb{T})$ satisfying
$$ \int_{\mathbb{T}}{\phi(x) dx} = 1, ~\phi(x) = \phi(1-x) \qquad \mbox{and} \qquad  \int_{\mathbb{T}}{\phi(x) e^{2 \pi i k x} dx} > 0 \quad \mbox{for all}~k \in \mathbb{N}.$$
This result is related in spirit to the classical Bochner-Herglotz theorem and variants exist on other
topological groups (see e.g. \cite{kui}). We emphasize that our Corollary, using the Jacobi $\theta-$function, is a consequence of a 
result on general compact manifolds that does not assume any type of group structure.

\subsection{Application to pair correlation.} The result cited above states that if for all $s>0$
  $$ \lim_{N \rightarrow \infty}{ \frac{1}{N} \# \left\{ 1 \leq m \neq n \leq N: |x_m - x_n| \leq \frac{s}{N} \right\}} = 2s,$$ 
  then the sequence is uniformly distributed.
A natural question is whether it is truly necessary to require the limit relation to hold for \textit{all} $s>0$. 
We can use our criterion from above to show that it suffices to know it for all $s \in \mathbb{N}$ (slightly sharper results could be obtained but this is not the focus of this paper).
This should still be far from optimal and sharper criteria could be of interest. 

 \begin{corollary}[Pair correlation]  Let $(x_n)_{n=1}^{\infty}$ be a sequence on $[0,1]$ and assume that for all $s \in \mathbb{N}$
  $$ \lim_{N \rightarrow \infty}{ \frac{1}{N} \# \left\{ 1 \leq m \neq n \leq N: |x_m - x_n| \leq \frac{s}{N} \right\}} = 2s,$$ 
  then the sequence is uniformly distributed.
 \end{corollary}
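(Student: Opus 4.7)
The plan is to reduce the statement to the Gaussian criterion recalled after Corollary~1: if $(t_N)$ is a sequence with $t_N \to 0$ for which
$$\lim_{N \to \infty} \frac{1}{N^2} \sum_{m,n=1}^N \frac{1}{\sqrt{t_N}} \exp\!\left(-\frac{(x_m - x_n)^2}{t_N}\right) = \sqrt{\pi},$$
then $(x_n)$ is uniformly distributed (the Gaussian criterion implies the Jacobi $\theta$ criterion of Corollary~1). We exhibit such a sequence of the form $t_N = \alpha_N/N^2$ with $\alpha_N \to \infty$ and $\alpha_N = o(N^2)$, where the rate of $\alpha_N$ depends on $(x_n)$ through the hypothesis.

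Writing $C_N(u) := (1/N)\#\{1 \le m \ne n \le N : |x_m - x_n| \le u/N\}$, the hypothesis reads $\epsilon_N(k) := |C_N(k) - 2k| \to 0$ for each $k \in \mathbb{N}$. Combining the layer-cake identity $e^{-s^2/\alpha} = \int_s^\infty (2u/\alpha)\,e^{-u^2/\alpha}\,du$ with the interchange of summation and integration, the Gaussian sum decomposes as
$$\frac{1}{\sqrt{\alpha_N}} + \frac{1}{\sqrt{\alpha_N}} \int_0^\infty K_{\alpha_N}(u)\, C_N(u)\, du, \qquad K_\alpha(u) := \frac{2u}{\alpha}\, e^{-u^2/\alpha}.$$
The kernel $K_\alpha$ is a probability density on $[0,\infty)$, and the substitution $v = u/\sqrt{\alpha}$ gives $\int_0^\infty K_\alpha(u) \cdot 2u\, du = \sqrt{\pi \alpha}$ for every $\alpha > 0$. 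Thus, replacing $C_N(u)$ by its ``ideal'' value $2u$ would produce $1/\sqrt{\alpha_N} + \sqrt{\pi}$, whose limit as $\alpha_N \to \infty$ is $\sqrt{\pi}$.

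The task is to control the error of this replacement. By monotonicity of $C_N$, for $u \in [k,k+1]$ one has $C_N(k) \le C_N(u) \le C_N(k+1)$, hence $|C_N(u) - 2u| \le 2 + \epsilon_N(k) + \epsilon_N(k+1)$. Splitting the error integral at a threshold $R_N$ yields two contributions: on $[0, R_N]$, a quantity bounded by $(2 + 2\max_{k \le R_N+1}\epsilon_N(k))/\sqrt{\alpha_N}$; on $[R_N, \infty)$, via the crude estimate $C_N(u) \le N$ together with the Gaussian decay of $K_{\alpha_N}$, a tail of order $(N + R_N)\,e^{-R_N^2/\alpha_N}/\sqrt{\alpha_N}$.

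The final step is parameter selection. A diagonal argument over the pointwise convergences $\epsilon_N(k) \to 0$ yields $R_N \to \infty$ with $\max_{k \le R_N+1} \epsilon_N(k) \to 0$; we then pick $\alpha_N$ subject to $\alpha_N \to \infty$ (controlling the diagonal term $1/\sqrt{\alpha_N}$ and the localized error) and $R_N^2/\alpha_N \ge \log N + \omega(1)$ (killing the tail). The main obstacle is precisely this balancing: the hypothesis provides no uniform rate for $\epsilon_N(k) \to 0$ in $k$, so $R_N$ may grow very slowly; making $\alpha_N$ simultaneously divergent and compatible with the tail decay, coupled tightly to the sequence-dependent $R_N$, is the technical heart of the proof, and once completed it triggers the Gaussian criterion.
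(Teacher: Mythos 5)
Your approach is the same in spirit as the paper's: reduce to the Gaussian form of Corollary~1 with $t_N = \alpha_N/N^2$ for a slowly diverging $\alpha_N$ chosen by a diagonal argument. Your layer-cake decomposition (writing the off-diagonal sum as $\alpha_N^{-1/2}\int_0^\infty K_{\alpha_N}(u)\,C_N(u)\,du$) is cleaner and more explicit than the paper's $L^\infty$ step-function approximation, and you correctly compute the diagonal term $1/\sqrt{\alpha_N}$ and the ideal value $\sqrt{\pi}$. You also correctly identify the tail as the obstruction.

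However, the parameter selection in your last paragraph does not close. You need simultaneously $\alpha_N \to \infty$ (to kill the diagonal and localized errors) and $R_N^2/\alpha_N \ge \log N + \omega(1)$ (to kill the tail $N\,e^{-R_N^2/\alpha_N}/\sqrt{\alpha_N}$, since the only available bound there is $C_N(u)\le N$). Multiplying the two constraints forces $R_N = \omega\bigl(\sqrt{\log N}\bigr)$. But the diagonal argument over the pointwise limits $\epsilon_N(k)\to 0$ produces a sequence $R_N\to\infty$ at a rate that is \emph{entirely controlled by the sequence} $(x_n)$ and can be arbitrarily slow — slower than $\sqrt{\log N}$, slower than $\log\log N$, etc. Since the hypothesis gives no uniform rate of convergence, you cannot guarantee $R_N \gtrsim \sqrt{\log N}$, and for such slowly converging sequences your two constraints are incompatible. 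Calling this ``the technical heart'' is accurate, but the proposal does not resolve it, and as written the balancing is actually impossible in the worst case; some additional idea (e.g.\ a kernel with compact spatial support in place of the Gaussian, so that no exponential tail over $[R_N,\infty)$ arises at all, or a sharper tail bound that does not resort to $C_N(u)\le N$) is needed. I should add, for fairness, that the paper's own write-up of this corollary is equally terse and does not make the tail control explicit either — its displayed error estimate $\varepsilon/N$ for the step-function replacement does not follow from the stated $L^\infty$ bound (one gets $N\varepsilon$ without further input), so the issue you flagged is a genuine subtlety of the argument that both your proposal and the paper leave unfinished.
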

The criterion cannot be directly applied to use pair correlation: for that we would be required to work on
the scale $t_N \sim N^{-2}$ and it is not too difficult to see that this is where the criterion has to stop working because the diagonal terms are already too large
 $$  \frac{1}{N^2} \sum_{m,n = 1}^{N}{\theta_{t_N}(x_n - x_m)} \geq  \frac{1}{N^2} \sum_{n = 1}^{N}{\theta_{t_N}(x_n - x_n)} = \frac{\theta_{t_N}(0)}{N} \sim \frac{1}{N t_N^{1/2}} \gtrsim 1.$$
However, it is fairly easy to see that it is possible to make a slight adaption to the argument and we will describe this adaption at the end of the paper. Many variants are conceivable, we prove the following natural generalization.
 \begin{corollary}[Weak Pair correlation]  Let $(x_n)_{n=1}^{\infty}$ be a sequence on $[0,1]$, let $0 < \alpha < 1$ and assume that for all $s > 0$
  $$ \lim_{N \rightarrow \infty}{ \frac{1}{N^{2-\alpha}} \# \left\{ 1 \leq m \neq n \leq N: |x_m - x_n| \leq \frac{s}{N^{\alpha}} \right\}} = 2s \qquad \mbox{a.s.},$$ 
  then the sequence is uniformly distributed.
 \end{corollary}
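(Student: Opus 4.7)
The plan is to invoke the Gaussian version of the earlier corollary at the scale $t_N = N^{-2\alpha}$. The diagonal contribution is
\[
\frac{1}{N^2}\sum_{n=1}^{N}\frac{1}{\sqrt{t_N}} \;=\; \frac{1}{N\sqrt{t_N}} \;=\; N^{\alpha-1},
\]
which vanishes in the limit precisely because $\alpha < 1$. This is exactly the place where the argument would break down at the Poissonian scale $\alpha = 1$, matching the warning issued in the earlier discussion.

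For the off-diagonal part, introduce the pair-correlation counting function
\[
G_N(s) \;:=\; \#\{1 \leq m \neq n \leq N \,:\, |x_m - x_n| \leq s/N^\alpha\},
\]
so that the hypothesis becomes $G_N(s)/N^{2-\alpha} \to 2s$ pointwise for every $s > 0$. The change of variable $s = N^\alpha |x_m - x_n|$ followed by one integration by parts shows
\[
\frac{N^\alpha}{N^2}\sum_{m \neq n} e^{-N^{2\alpha}(x_m-x_n)^2} \;=\; \frac{1}{N^{2-\alpha}}\int_0^{\infty} e^{-s^2}\,dG_N(s) \;=\; \frac{2}{N^{2-\alpha}}\int_0^{\infty} s\,e^{-s^2}\,G_N(s)\,ds,
\]
and formally replacing $G_N(s)/N^{2-\alpha}$ by its pointwise limit $2s$ returns $4\int_0^{\infty} s^2 e^{-s^2}\,ds = \sqrt{\pi}$, which is exactly the value demanded by the Gaussian criterion; uniform distribution then follows from that corollary.

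Fatou's lemma immediately supplies the lower bound $\liminf \geq \sqrt{\pi}$, so the main technical obstacle is the matching $\limsup \leq \sqrt{\pi}$, i.e.\ the rigorous interchange of limit and integral. Since $s \mapsto G_N(s)$ is non-decreasing and the limit $2s$ is continuous, a P\'olya/Dini-style argument promotes pointwise to uniform convergence on every fixed compact $[0,T]$, correctly handling that piece. The tail $[T,\infty)$ is the delicate point: the trivial bound $G_N(s) \leq N(N-1)$ only contributes $N^{\alpha}e^{-T^2}$, which requires $T$ to grow like $\sqrt{\alpha \log N}$ before it vanishes. I would handle this by a double cutoff: diagonally extract from the pointwise convergence at integer arguments an increasing sequence $N_k$ with $G_N(k)/N^{2-\alpha} \leq 3k$ whenever $N \geq N_k$, propagate the bound through monotonicity to $G_N(s)/N^{2-\alpha} \lesssim s + 1$ on a slowly growing window $[0, T'(N)]$ with $T'(N) \to \infty$, and control $[T'(N),\infty)$ by the Gaussian decay combined with a logarithmic safety margin so that the factor $N^\alpha$ is absorbed. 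An $\varepsilon$-$T$ argument --- first send $N \to \infty$ with $T$ fixed, then $T \to \infty$ --- combines these pieces to yield the required upper bound, completing the verification of the criterion.
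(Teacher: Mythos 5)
Your overall strategy is the same as the paper's: set $t_N = N^{-2\alpha}$, write the Gaussian criterion in terms of the counting function $G_N$, observe the diagonal contributes $N^{\alpha-1}\to 0$, and pass the pair-correlation hypothesis into the limit. You replace the paper's step-function approximation by a Stieltjes integral plus integration by parts, which is a cleaner bookkeeping device but not a genuinely different route. You also correctly identify the one real technical subtlety that the paper waves away (``the missing steps \dots are standard and left to the reader''): after dividing by $N^{2-\alpha}$, the trivial bound $G_N(s)\le N^2$ leaves a factor $N^{\alpha}$ in the tail, so the tail must be handled with some care.

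However, your proposed fix has a gap. You want to extract from the pointwise hypothesis a threshold sequence $N_k$ with $G_N(k)/N^{2-\alpha}\le 3k$ for $N\ge N_k$, let $T'(N)$ be the resulting slowly-growing window, and then absorb the factor $N^\alpha$ on $[T'(N),\infty)$ by Gaussian decay. The problem is that the hypothesis gives no control on the rate of convergence, so $N_k$ may grow arbitrarily fast and $T'(N)$ may grow arbitrarily slowly — for instance $T'(N)\sim\log\log N$. Then $N^{\alpha}e^{-T'(N)^2}\to\infty$ and the ``logarithmic safety margin'' does not exist. The window obtained by diagonal extraction simply cannot be guaranteed to reach the scale $\sqrt{\alpha\log N}$ that the trivial tail bound requires, and you have no intermediate estimate on $G_N(s)$ for $s\in[T'(N),\sqrt{\alpha\log N}]$.

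The gap can be closed by a purely deterministic bound that requires only the convergence at the single value $s=1$. Partition $\mathbb{T}$ into $\lceil N^\alpha\rceil$ intervals of length $\le N^{-\alpha}$ and let $c_\ell$ be the counts; then for every integer $k\ge 1$, $G_N(k)+N\le(2k+3)\sum_\ell c_\ell^2$ (a pair at distance $\le k N^{-\alpha}$ lands in at most $2k+3$ consecutive intervals, and $\sum_\ell c_\ell c_{\ell+j}\le\sum_\ell c_\ell^2$), while $\sum_\ell c_\ell^2\le G_N(1)+N$. Once $N$ is large enough that $G_N(1)\le 3N^{2-\alpha}$, this gives $G_N(s)\lesssim (s+1)\,N^{2-\alpha}$ uniformly in $s\ge 1$, so $\frac{1}{N^{2-\alpha}}\int_T^\infty s\,e^{-s^2}G_N(s)\,ds\lesssim\int_T^\infty s(s+1)e^{-s^2}\,ds$, which tends to $0$ as $T\to\infty$ \emph{uniformly} in $N$. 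With this in hand your $\varepsilon$-$T$ argument goes through, so the proposal is repairable along the lines you indicated, but as written the tail step does not hold.
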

We emphasize that this result is more widely applicable since the requirement of weak pair correlation is less stringent. Consider, for example, a sequence $(x_n)_{n=1}^{\infty}$
obtained by taking $x_{2n-1}$ to be i.i.d. uniformly chosen random variables on $[0,1]$ and $x_{2n} = x_{2n-1}$. This sequence does not have Poissonian pair correlation
but satisfies the notion of weak pair correlation for all $0<\alpha<1$ a.s.
The argument naturally generalizes to other geometric spaces and can be roughly summarized as saying that whenever
$$ \#\left\{ 1 \leq m, n \leq N: \|x_m -x_n \| \leq s \cdot t_N \right\} \qquad \mbox{for some} \quad t_N \rightarrow 0 ~\mbox{and all}~s>0$$
behaves as it would for Poissonian random variables, then the sequence is uniformly distributed since criteria of this type can
be used to determine the validity of the limit relation in our criterion.

\subsection{A quantitative result.} Our method is flexible enough to allow for the derivation of quantitative results. We only discuss the simplest case,
the method applies to fairly general discrepancy systems on compact manifolds. Let now $x_1, x_2, \dots, x_N \in \mathbb{T}$. Discrepancy is defined
 as the maximum deviation of uniform and empirical distribution on the set of all intervals $J \subset \mathbb{T}$
 $$D_N(\left\{x_1, \dots, x_N\right\}) = \sup_{J \subset \mathbb{T}}{\left| \frac{\#\left\{1 \leq i \leq N: x_i \in J\right\}}{N} - |J| \right| }.$$
 It is easy to see that $D_N$ tends to $0$ as $N \rightarrow \infty$ if and only if the sequence is uniformly distributed. We recall
that for all $x_1, \dots, x_N \in \mathbb{T}$
$$ \frac{1}{N^2} \sum_{m,n = 1}^{N}{\theta_{t}(x_n - x_m)} \geq 1 \qquad \mbox{and decreases monotonically in}~t.$$
 \begin{corollary}[Discrepancy bound] There exists a universal constant $c>0$ such that for any $x_1, x_2, \dots, x_N \in \mathbb{T}$
  $$ D_N^2 \leq c \left( \frac{1}{N^2} \sum_{m,n = 1}^{N}{\theta_{-\frac{D_N^2}{c \log{D_N}}}(x_n - x_m)}  -  1 \right)$$
 \end{corollary}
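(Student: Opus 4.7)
The plan is to reduce the discrepancy bound to an optimization problem obtained by combining Plancherel's theorem for the theta sum with a standard smoothed approximation of indicator functions of intervals.

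First, I would record the spectral identity
$$E(t) \;:=\; \frac{1}{N^2}\sum_{m,n=1}^{N}\theta_t(x_n-x_m)-1 \;=\; \sum_{k\ne 0} e^{-4\pi^2 k^2 t}\left|\frac{1}{N}\sum_{n=1}^{N}e^{2\pi i k x_n}\right|^2,$$
which is immediate by expanding $\theta_t$ in its Fourier series. Equivalently, if $\nu_N = \frac{1}{N}\sum_n \delta_{x_n} - dx$, then $E(t) = \|e^{t\Delta/2}\nu_N\|_{L^2(\mathbb{T})}^2$. This makes the right-hand side of the claimed inequality the natural quadratic quantity attached to $\nu_N$ at the heat scale $t$.

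Second, for an interval $J = [a,b]\subset\mathbb{T}$, I would sandwich $\chi_J$ between smooth envelopes $\tilde\chi_J^\pm$ obtained as convolutions $\chi_{[a\mp\sigma/2,\,b\pm\sigma/2]} \ast \phi_{\sigma/2}$, where $\phi_{\sigma/2}$ is a Gaussian bump of width $\sigma/2$ and $\sigma \sim \sqrt{t}$. These satisfy $\tilde\chi_J^- \le \chi_J \le \tilde\chi_J^+$ and $\int(\tilde\chi_J^+ - \tilde\chi_J^-) \lesssim \sigma$, while their Fourier coefficients obey $|\widehat{\tilde\chi_J^\pm}(k)| \lesssim |k|^{-1} e^{-c k^2\sigma^2}$ for a fixed $c>0$. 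Cauchy--Schwarz with the weight $e^{\pm 2\pi^2 k^2 t}$ yields
$$\big|\langle \nu_N,\tilde\chi_J^\pm\rangle\big|^2 \;\le\; \Big(\sum_{k\ne 0} e^{4\pi^2 k^2 t}\big|\widehat{\tilde\chi_J^\pm}(k)\big|^2\Big)\,E(t),$$
and choosing the constant in $\sigma^2 = (\text{const})\cdot t$ large enough that $c\sigma^2 > 4\pi^2 t$ makes the first factor a universal $O(1)$. Combining this with the $L^1$ sandwich error gives
$$\big|\mu_N(J) - |J|\big| \;\le\; C\sqrt{t} + C\sqrt{E(t)},$$
hence, taking the supremum over $J$, $D_N^2 \le C'(t + E(t))$.

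Third, I would close the estimate by inserting $t = -D_N^2/(c\log D_N) = D_N^2/(c|\log D_N|)$ into the bound $D_N^2 \le C'(t+E(t))$. For $D_N$ small the factor $C'/(c|\log D_N|)$ is strictly less than, say, $1/2$, so the $t$-term on the right can be absorbed into the left, leaving $D_N^2 \le 2C' E(t)$. The role of the logarithm in the denominator is exactly to guarantee this absorption while keeping $t$ as large as possible, so that $E(t)$ (which is monotone decreasing in $t$) is correspondingly small. The corner case where $D_N$ is not small is trivial since $E(t) \ge 0$ and one may absorb it into the constant $c$.

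The main technical obstacle lies in step two: constructing a mollifier whose Fourier transform decays fast enough to dominate the amplifying weight $e^{4\pi^2 k^2 t}$ coming from Cauchy--Schwarz, while simultaneously retaining an $L^1$ envelope error of order $\sqrt{t}$. A Gaussian (or any $C^\infty$ bump with super-polynomial Fourier decay) of width $\sigma \sim \sqrt{t}$ does the job, and tracking the constants $c$ and $C'$ through the calculation is what determines the admissible proportionality between $t$ and $D_N^2/|\log D_N|$.
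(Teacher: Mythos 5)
Your argument is essentially the Fourier-dual formulation of the paper's proof: the paper applies the heat semigroup to the empirical measure, shows that the smoothed measure carries too much $L^1$ mass on an $\varepsilon/4$-enlargement $J^*$ of the bad interval (using a concentration Lemma for $\theta_t$), and then applies Cauchy--Schwarz on $J^*$ to drop down to the $L^2$ norm $\|e^{(t/2)\Delta}\nu_N\|_{L^2}^2 = E(t)$. You instead smooth the indicator of $J$ and do the Cauchy--Schwarz in Fourier space with the weight split $e^{\pm 2\pi^2 k^2 t}$. These are the same inequality after an integration by parts ($\langle e^{(t/2)\Delta}\nu_N, \chi_{J^*}\rangle = \langle \nu_N, e^{(t/2)\Delta}\chi_{J^*}\rangle$), and your final absorption step ($t = D_N^2/(c|\log D_N|)$ so that the $\sqrt{t}$ smoothing error is dominated by $D_N$) is exactly what the paper's numerically explicit choice $t = \tfrac{1}{100}\varepsilon^2/\log(20/\varepsilon)$ accomplishes.

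There is, however, a genuine gap in Step two as stated. You claim $\tilde\chi_J^- \le \chi_J \le \tilde\chi_J^+$ with $\tilde\chi_J^\pm$ obtained by convolving a slightly shifted indicator with a Gaussian bump $\phi_{\sigma/2}$. This pointwise sandwich is \emph{false}: a Gaussian has unbounded support, so $\chi_{[a-\sigma/2,\,b+\sigma/2]}*\phi_{\sigma/2}$ evaluated at the endpoint $x=a$ equals $\int_{-\sigma/2}^{b-a+\sigma/2}\phi_{\sigma/2}(z)\,dz < 1$, whence $\tilde\chi_J^+(a) < \chi_J(a)$. There is no $C^\infty$ compactly-supported bump with Fourier decay beating $e^{-c k^2 \sigma^2}$ (Gevrey bumps give only $e^{-c|k|^\alpha}$, $\alpha<1$, which loses against the amplifying factor $e^{4\pi^2 k^2 t}$), so one cannot simply swap the mollifier. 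The fix is either (a) relax to approximate domination --- if $\tilde\chi_J^+ \ge \chi_J - \eta$ with $\eta$ controlled, then $\mu_N(J) \le \langle\mu_N,\tilde\chi_J^+\rangle + \eta$, and one must enlarge the base interval by $\gtrsim \sigma\sqrt{\log(1/\sigma)}$ to make $\eta$ negligible, which slightly degrades the smoothing error to $O(\sqrt{t\log(1/t)})$ but still closes; or (b) imitate the paper: take $\tilde\chi_J^+ = e^{(t/2)\Delta}\chi_{J^*}$ with $J^*$ an enlargement of $J$, accept that this is only approximately $\ge \chi_J$ on $J$, and control the defect by the paper's Lemma on the mass of $\theta_t$ near the origin. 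Either way the quantitative conclusion $D_N^2 \le C'(t\log(1/t) + E(t))$ survives and the absorption step goes through with an adjusted constant.

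One further minor point worth flagging in your write-up: the absorption $C'/(c|\log D_N|) < 1/2$ requires $D_N$ bounded away from $1$; when $D_N \to 1$ we have $t \to \infty$ and $E(t) \to 0$, so the claimed inequality cannot hold. The paper implicitly sidesteps this by using $\log(20/D_N)$ (bounded below by $\log 20 > 0$) rather than $|\log D_N|$, which keeps $t$ uniformly bounded. Your ``corner case is trivial since $E(t)\ge 0$'' remark does not actually handle this; to match the paper's statement you should either assume $D_N$ small or build in a bounded offset inside the logarithm as the paper does.
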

We believe the result to be somewhat amusing but do not know whether it can be useful in a more general context.
We note that the $\theta-$function operates on spatial scale $\sim D_N \left(\log{D_N}\right)^{-1/2}$. 
Similar results can be obtained on general manifolds using the same argument. We point out a connection to
crystallization problems: given $N$ points on a manifold interacting via a nonlocal energy, minimizing configuration
often arrange themselves into periodic structures (we refer to \cite{blanc} for an introduction and to \cite{ber,mon}
for results involving the Jacobi $\theta-$function).

\subsection{The general result.}  Let $(M,g)$ be a smooth compact manifold. We use  $e^{t\Delta}$ to denote the heat kernel, i.e. the semigroup that allows to solve the heat equation
$$
(\partial_t - \Delta_g) e^{t\Delta} u_0 = 0 \qquad \mbox{on}~M \times [0,\infty]
$$
 and will apply it mostly to Dirac $\delta$ functions located on the manifold. Note that
classical short-time asymptotics show
$$ \left[ e^{t\Delta} \delta_x\right](y) \sim t^{-d/2} \exp \left( -\frac{d(x,y)^2}{4t} \right),$$
where $d(x,y)$ is the geodesic distance: we are therefore, for $t$ sufficiently small, essentially dealing with Gaussians centered at $x$.
We will prove the general inequality
$$  \frac{1}{N^2} \sum_{m,n = 1}^{N}{ \left(e^{t\Delta}\delta_{x_m}\right)(x_n)   } \geq \frac{1}{\mbox{vol}(M)}$$
and show that asymptotic sharpness characterizes uniform distribution of $(x_n)_{n=1}^{\infty}$.
\vspace{-10pt}
\begin{center}
\begin{figure}[h!]
\begin{tikzpicture}[scale=0.8]
    \draw[thick, smooth cycle,tension=.7] plot coordinates{(-1,0) (0.5,2) (2,2) (4,3) (4.5,0)};
    \coordinate (A) at (1,1);
    \draw[thick] (A) arc(140:40:1) (A) arc(-140:-20:1) (A) arc(-140:-160:1);
\filldraw (4,1) ellipse (0.05cm and 0.05cm);
\draw[rotate around={45:(4,1)}, thick] (4,1) ellipse (1*0.28cm and 1*0.18cm);
\draw[rotate around={45:(4,1)}] (4,1) ellipse (2*0.28cm and 2*0.18cm);
\draw[rotate around={45:(4,1)}, dashed] (4,1) ellipse (3*0.28cm and 3*0.18cm);
\end{tikzpicture}
\caption{$\left[ e^{t\Delta} \delta_x\right](y)$ behaves like a Gaussian centered at $y$ and scale $\sim \sqrt{t}$.}
\end{figure}
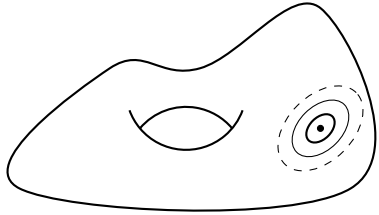
\end{center}

\begin{thm}  Let $(M,g)$ be a smooth compact manifold and let $(x_n)_{n=1}^{\infty}$ be a sequence on $M$. If there exists a bounded sequence of times $0 < t_N \leq C$ such that
$$ \lim_{N \rightarrow \infty}{  \frac{1}{N^2} \sum_{m,n = 1}^{N}{ \left(e^{t_N\Delta}\delta_{x_m}\right)(x_n)   }} = \frac{1}{\vol(M)},$$
then the sequence is uniformly distributed on $(M,g)$. Moreover, if $t_N = c > 0$ is constant, then this limit relation holds if and only if $(x_n)_{n=1}^{\infty}$ is uniformly distributed.
\end{thm}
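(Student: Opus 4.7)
\medskip

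\textbf{Proof plan.} The plan is to expand everything spectrally using the heat kernel. Let $\{\phi_k\}_{k\geq 0}$ be an $L^2$-orthonormal basis of real eigenfunctions of $-\Delta_g$ with eigenvalues $0 = \lambda_0 < \lambda_1 \leq \lambda_2 \leq \dots$. In particular, $\phi_0 \equiv 1/\sqrt{\vol(M)}$. The heat kernel admits the absolutely convergent (for $t>0$) expansion
\[
 [e^{t\Delta}\delta_x](y) \;=\; \sum_{k=0}^{\infty} e^{-\lambda_k t}\,\phi_k(x)\phi_k(y).
\]
Substituting and interchanging the finite double sum with the spectral series yields the key identity
\[
 \frac{1}{N^2}\sum_{m,n=1}^{N}[e^{t\Delta}\delta_{x_m}](x_n) \;=\; \sum_{k=0}^{\infty} e^{-\lambda_k t}\left|\frac{1}{N}\sum_{n=1}^{N}\phi_k(x_n)\right|^2.
\]
Every summand is non-negative, and the $k=0$ term evaluates to $1/\vol(M)$; this already establishes the general lower bound $\frac{1}{N^2}\sum_{m,n}[e^{t\Delta}\delta_{x_m}](x_n) \geq 1/\vol(M)$ promised in the abstract.

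\medskip

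Next I address the main implication. Assume $0 < t_N \leq C$ and that the LHS tends to $1/\vol(M)$. Subtracting the $k=0$ term, the non-negative series
\[
 \sum_{k=1}^{\infty} e^{-\lambda_k t_N}\left|\frac{1}{N}\sum_{n=1}^{N}\phi_k(x_n)\right|^2 \;\longrightarrow\; 0.
\]
For every fixed $k \geq 1$, the boundedness $t_N \leq C$ gives $e^{-\lambda_k t_N} \geq e^{-\lambda_k C} > 0$, so each individual coefficient forces
\[
 \frac{1}{N}\sum_{n=1}^{N}\phi_k(x_n) \;\longrightarrow\; 0 \qquad (k \geq 1).
\]
Since $\int_M \phi_k\, d\mathrm{vol} = 0$ for $k \geq 1$ and the eigenfunctions span a dense subspace of $C(M)$ modulo constants (Peter--Weyl / spectral completeness), this is precisely Weyl's equidistribution criterion on $(M,g)$, and $(x_n)$ equidistributes.

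\medskip

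For the converse under $t_N \equiv c>0$: the function $(x,y)\mapsto [e^{c\Delta}\delta_x](y)$ is the heat kernel at a fixed positive time, hence smooth (in particular continuous) on $M \times M$. If $(x_n)$ equidistributes then integrating this continuous function against the empirical measures gives
\[
 \frac{1}{N^2}\sum_{m,n=1}^{N}[e^{c\Delta}\delta_{x_m}](x_n) \;\longrightarrow\; \frac{1}{\vol(M)^2}\iint_{M\times M}[e^{c\Delta}\delta_x](y)\,d\mathrm{vol}(x)\,d\mathrm{vol}(y) \;=\; \frac{1}{\vol(M)},
\]
using $\int_M [e^{c\Delta}\delta_x](y)\,d\mathrm{vol}(x) = 1$ because $e^{c\Delta}$ preserves constants.

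\medskip

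I do not anticipate a serious obstacle: the argument is essentially Parseval together with a positivity observation. The only points demanding care are (i) justifying the term-by-term interchange in the spectral series, which follows from smoothness of the heat kernel for any $t>0$ together with uniform convergence on compacta, and (ii) the genuine use of the hypothesis $t_N \leq C$ — if $t_N$ were allowed to grow, the factor $e^{-\lambda_k t_N}$ could collapse and we could no longer deduce vanishing of each Weyl exponential sum separately. This is precisely the content that makes the criterion meaningful, and it is the one place where the proof does not run on autopilot.
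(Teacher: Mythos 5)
Your argument is correct and takes a genuinely different --- and considerably shorter --- route than the paper. Both proofs start from the same spectral identity
\[
 \frac{1}{N^2}\sum_{m,n=1}^{N}[e^{t\Delta}\delta_{x_m}](x_n) \;=\; \sum_{k\geq 0} e^{-\lambda_k t}\Bigl|\tfrac{1}{N}\sum_{n}\phi_k(x_n)\Bigr|^2 ,
\]
but diverge immediately afterward. You argue \emph{directly}: since each term is nonnegative and $e^{-\lambda_k t_N}\geq e^{-\lambda_k C}>0$ for fixed $k$, the hypothesis forces every Weyl sum $\tfrac1N\sum_n\phi_k(x_n)$ to vanish, and density of $\mathrm{span}\{\phi_k\}$ in $C(M)$ then finishes the job. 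The paper instead argues by \emph{contrapositive}: from the failure of equidistribution it manufactures a ball $B$ with too many points, smooths by the heat kernel to get a quantitative $L^2$ lower bound on the deviation from the constant $1/\vol(M)$, and then needs a gradient/Sobolev estimate on $e^{t_0\Delta}\frac1N\sum_m\delta_{x_m}$ to push that $L^2$ mass into a \emph{finite} range of eigenvalues, before running it back through the spectral identity at time $2C$. That longer route is what feeds the quantitative discrepancy bound in the paper's Section 3 (you control, uniformly in the configuration, how far the heat-kernel energy must exceed $1/\vol(M)$ when the discrepancy is at least $\varepsilon$); your qualitative argument would not yield that without further work. Conversely, your approach is cleaner for the theorem as stated and avoids the gradient-bound machinery entirely, at the cost of invoking the (standard, but worth stating once) fact that finite linear combinations of Laplace eigenfunctions are dense in $C(M)$ --- which holds because smooth functions have rapidly decaying Fourier coefficients and the $\phi_k$ obey polynomial sup-norm bounds, so the eigenfunction expansion of a smooth function converges uniformly. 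Your treatment of the converse (weak convergence of $\mu_N\otimes\mu_N$ against the continuous kernel $[e^{c\Delta}\delta_x](y)$, combined with $\int_M[e^{c\Delta}\delta_x](y)\,d\mathrm{vol}(x)=1$) is correct and essentially the same in spirit as what the paper leaves implicit.
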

It is immediately clear that on special manifolds such as $\mathbb{T}^d$ or $\mathbb{S}^{d-1}$ (where explicit formulas for the heat kernel are available), the result could be simplified and put into a similar form as the corollaries above on $[0,1]$ or $\mathbb{T}$. It is also not difficult to see that the condition in the Theorem can never be satisfied if $t_N$ decays faster than $N^{-2/d}$ since
$$ \frac{1}{N^2} \sum_{m,n = 1}^{N}{ \left(e^{t_N\Delta}\delta_{x_m}\right)(x_n)   } \geq \frac{1}{N^2} \sum_{m = 1}^{N}{ \left(e^{t_N\Delta}\delta_{x_m}\right)(x_m)   } \gtrsim \frac{1}{N} t_N^{-d/2},$$
which is $\sim 1$ for $t_N \sim N^{-2/d}$. We quickly sketch what happens if we apply the result on the torus: the heat kernel has the explicit form
$$ [e^{t \Delta}(\delta_x)](y) = \theta_t(x-y).$$
The comparison between the Jacobi $\theta-$function $\theta_t(x)$ for $t$ small comes from the asymptotic estimate
$$ \theta_t(x) \sim \frac{1}{\sqrt{4 \pi t}} \exp \left( -\frac{x^2}{4t} \right).$$
It is easy to see the incurred error is small (indeed, many orders smaller than would be required).

\section{Proof of the Main Theorem}

\subsection{Warming up.} Before going into details, we give a very simple argument for the monotonicity formula in the simplest case and
prove that for any $0 < s < t$ and any $x_1, x_2, \dots, x_N \in \mathbb{T}$
$$ \frac{1}{N^2} \sum_{m,n = 1}^{N}{\theta_{s}(x_n - x_m)} \geq  \frac{1}{N^2} \sum_{m,n = 1}^{N}{\theta_{t}(x_n - x_m)} \geq 1.$$ 
The proof is straightforward: first, we rewrite the expression as
$$  \frac{1}{N^2} \sum_{m,n = 1}^{N}{\theta_{s}(x_n - x_m)} = \int_{\mathbb{T}}{  \theta_s * \left(\frac{1}{N}\sum_{n=1}^{N}{\delta_{x_n}} \right) \overline{\left( \frac{1}{N}\sum_{n=1}^{N}{\delta_{x_n}} \right)} dx}.$$
The Plancherel identity yields
$$ \int_{\mathbb{T}}{  \theta_s * \left(\frac{1}{N}\sum_{n=1}^{N}{\delta_{x_n}} \right) \overline{ \left( \frac{1}{N}\sum_{n=1}^{N}{\delta_{x_n}} \right) } dx} = \sum_{\ell \in \mathbb{Z}}{ \widehat{ \theta_s  * \left(\frac{1}{N}\sum_{n=1}^{N}{\delta_{x_n}} \right)}(\ell) \overline{ \widehat{ \left( \frac{1}{N}\sum_{n=1}^{N}{\delta_{x_n}} \right)}(\ell)} }.$$
We note that
$$  \widehat{   \theta_t  * \left(\frac{1}{N}\sum_{n=1}^{N}{\delta_{x_n}} \right) }(\ell) = \widehat{\theta_t}(\ell)  \widehat{ \left(\frac{1}{N}\sum_{n=1}^{N}{\delta_{x_n}} \right)}(\ell) \qquad \mbox{and} \qquad  \widehat \theta_t = \sum_{\ell \in \mathbb{Z}}{e^{-4 \pi^2 \ell^2 t} e^{2\pi i \ell x}}.$$
Finally, an expansion into Fourier series
$$  \frac{1}{N}\sum_{n=1}^{N}{\delta_{x_n}}  = \sum_{\ell \in \mathbb{Z}}{a_{\ell} e^{2\pi i \ell x}}$$
with $a_{\ell} = a_{-\ell}$ and $a_0 = 1$ allows us to write
$$ \int_{\mathbb{T}}{  \theta_t * \left(\frac{1}{N}\sum_{n=1}^{N}{\delta_{x_n}} \right) \overline{ \left( \frac{1}{N}\sum_{n=1}^{N}{\delta_{x_n}} \right) } dx } = \sum_{\ell \in \mathbb{Z}}{e^{-4 \pi^2 \ell^2 t}  |a_{\ell}|^2} = 1 + \sum_{\ell \in \mathbb{Z}\atop \ell \neq 0}{e^{-4 \pi^2 \ell^2 t}  |a_{\ell}|^2} ,$$
which is monotonically decreasing in $t$. On general manifolds, we will repeat the argument with the Fourier basis of $L^2$ being replaced by eigenfunctions of $-\Delta_g$. The semigroup
properties of the heat kernel serve as a substitute for the behavior of convolution under Fourier transform.

\subsection{Structure of the Argument.} We quickly outline the overall structure of the argument and will then divide the proof accordingly.
The proof has five steps.

\begin{enumerate}
\item  We will start by showing that
$$\frac{1}{N^2} \sum_{m,n = 1}^{N}{ \left(e^{t\Delta}\delta_{x_m}\right)(x_n)   }  \qquad  \mbox{is monotonically decreasing in}~t.$$
Since we are dealing with a bounded sequence of times $0 < t_N \leq C$, monotonicity implies that it suffices to prove the main result only for $t_n = C$.

\item If the sequence is not uniformly distributed, there exists a ball $B$ and $\varepsilon > 0$ such that for infinitely many $N$ there are $(|B|+\varepsilon)N/\vol(M)$ 
out of the first $N$ elements contained in the ball $B$. We then consider, for a sufficiently small but fixed time $\delta = \delta_{B, \varepsilon} > 0$, the function
$$  e^{\delta \Delta} \frac{1}{N} \sum_{m = 1}^{N}{ \delta_{x_m}}$$
and prove that the average value of the function in a small neighborhood of $B$ is bigger than $\vol(M)^{-1}+c_{\varepsilon, \delta, B} $ for some fixed $c_{\varepsilon, \delta, B} > 0$ and infinitely many $N \in \mathbb{N}$.
\item The Cauchy-Schwarz inequality then implies
$$ \left\| e^{\delta \Delta} \frac{1}{N} \sum_{m = 1}^{N}{ \delta_{x_m}} - \frac{1}{\vol(M)}   \right\|_{L^2(M)} \geq c^*_{\varepsilon, \delta, B} \qquad ~\mbox{for infinitely many}~N\in \mathbb{N}.$$
\item We use the spectral theorem, the eigenfunctions $(\phi_k)_{k=1}^{\infty}$ of the Laplace-Beltrami operator $-\Delta_g$ and inequalities related to the compactness $e^{\delta \Delta}:L^1(M) \rightarrow H^s(M)$ to conclude that there exists a constant $N_0 \in \mathbb{N}$, depending only on $B, \varepsilon, \delta$ such that for all $N$ and all $x_1, \dots, x_N \in M$
$$\sum_{k \leq N_0}{ \left| \left\langle e^{\delta \Delta} \frac{1}{N} \sum_{m = 1}^{N}{ \delta_{x_m}} - \frac{1}{\vol(M)} , \phi_k \right\rangle \right|^2} \geq \frac{1}{2} \left\| e^{\delta \Delta} \frac{1}{N} \sum_{m = 1}^{N}{ \delta_{x_m}} - \frac{1}{\vol(M)}   \right\|^2_{L^2(M)}.$$
 Combining these last two steps implies the existence of infinitely many $N \in \mathbb{N}$ such that
$$ \sum_{k \leq N_0}{ \left| \left\langle e^{\delta \Delta} \frac{1}{N} \sum_{m = 1}^{N}{ \delta_{x_m}} - \frac{1}{\vol(M)} , \phi_k \right\rangle \right|^2} \geq \frac{1}{2}\left(c^{*}_{\varepsilon, \delta, B}\right)^2 > 0$$
 and, finally, we can use this to show that
$$\sum_{k \leq N_0}{ \left| \left\langle e^{C \Delta}  \frac{1}{N}\sum_{m = 1}^{N}{ \delta_{x_m}} - \frac{1}{\vol(M)} , \phi_k \right\rangle \right|^2} \geq c^{**}_{\varepsilon, \delta, B, N_0,C} > 0 \quad \mbox{for infinitely many}~N\in \mathbb{N}.$$
\item We conclude by arguing that
$$
\left\langle  e^{2C \Delta}  \frac{1}{N}\sum_{m = 1}^{N}{ \delta_{x_m}},  \frac{1}{N}\sum_{m = 1}^{N}{ \delta_{x_m}} \right\rangle  \geq  \frac{1}{\vol(M)} + \sum_{k \leq N_0}^{}{ \left| \left\langle  e^{C \Delta}  \frac{1}{N}\sum_{m = 1}^{N}{ \delta_{x_m}}, \phi_k \right\rangle \right|^2}
$$
from which the result then follows when using that
$$ \left\langle  e^{C \Delta}  \frac{1}{N}\sum_{m = 1}^{N}{ \delta_{x_m}} - \frac{1}{\vol(M)}, \phi_k \right\rangle = \left\langle  e^{C \Delta}  \frac{1}{N}\sum_{m = 1}^{N}{ \delta_{x_m}}, \phi_k \right\rangle  \qquad \mbox{for all}~k \geq 1$$
because these eigenfunctions are orthogonal to $\phi_0(x) = \vol(M)^{-1/2}$.
\end{enumerate}

\subsection{Proof of the Theorem.}

\begin{proof}
Let $(M,g)$ be given and let us consider the $L^2-$normalized Laplacian eigenfunctions
$$ -\Delta_g \phi_n = \lambda_n \phi_n$$
as a basis of $L^2(M)$. Observe that $\lambda_0 = 0$ and $\phi_0 = \vol(M)^{-1/2}$ is a constant function.\\
\textit{Step 1.} We can rewrite the expression as
$$\frac{1}{N^2} \sum_{m,n = 1}^{N}{ \left(e^{t_N\Delta}\delta_{x_m}\right)(x_n)   }  =
 \left\langle e^{t_N\Delta}  \left( \frac{1}{N} \sum_{m=1}^{N}{\delta_{x_m}} \right), \left( \frac{1}{N} \sum_{m=1}^{N}{\delta_{x_m}}\right) \right\rangle.$$
This particular algebraic structure behaves well under the heat flow: for any $f \in C^{\infty}(M)$, we can write
$$ f = \sum_{k =0}^{\infty}{ \left\langle f, \phi_k \right\rangle \phi_k} \qquad \mbox{and} \qquad  e^{t\Delta}f = \sum_{k =0}^{\infty}{ e^{-\lambda_k t} \left\langle f, \phi_k \right\rangle \phi_k}  $$
and thus
$$ \left\langle e^{t \Delta} f, f \right\rangle = \sum_{k =0}^{\infty}{ e^{-\lambda_k t} \left\langle f, \phi_k \right\rangle^2}.$$
This quantity is obviously monotonically decreasing in $t$. Note that
$$\lim_{t \rightarrow \infty}{ \left\langle e^{t \Delta} f, f \right\rangle} =  \left\langle f, \phi_0 \right\rangle^2 =  \left\langle f, \frac{1}{\vol(M)^{1/2}} \right\rangle^2 = \frac{1}{\vol(M)^{}} \left(  \int_{M}{ f dg}\right)^2,$$
which immediately implies, using a density argument, that for all $t > 0$
$$ \left\langle e^{t\Delta}  \left( \frac{1}{N} \sum_{m=1}^{N}{\delta_{x_m}} \right), \left( \frac{1}{N} \sum_{m=1}^{N}{\delta_{x_m}}\right) \right\rangle \geq \frac{1}{\vol(M)}.$$

\textit{Step 2.} Let us now assume that the sequence $(x_n)_{n=1}^{\infty}$ is not uniformly distributed but that nonetheless
$$ \lim_{N \rightarrow \infty}{  \frac{1}{N^2} \sum_{m,n = 1}^{N}{ \left(e^{t_N\Delta}\delta_{x_m}\right)(x_n)   }} = \frac{1}{\vol(M)}.$$
The monotonicity of the expression under the heat flow implies that also
$$ \lim_{N \rightarrow \infty}{  \frac{1}{N^2} \sum_{m,n = 1}^{N}{ \left(e^{C\Delta}\delta_{x_m}\right)(x_n)   }} = \frac{1}{\vol(M)},$$
where $C$ is the uniform upper bound on the sequence of times $t_N$. Not being uniformly distributed means there exists a geodesic ball $B \subset M$ and $\varepsilon_0 > 0$ such that
 $$ \left| \frac{  \# \left\{1 \leq m \leq N: x_m \in B\right\}}{N} - \frac{|B|}{\vol(M)} \right| \geq \varepsilon_0 \qquad \mbox{for infinitely many}~N.$$
We shall assume that
 $$  \frac{  \# \left\{1 \leq m \leq N: x_m \in B\right\}}{N} -  \frac{|B|}{\vol(M)} \geq \varepsilon_0 \qquad \mbox{for infinitely many}~N$$
because the other case implies the same estimate for another ball $B' \subseteq M \setminus B$ (possibly with a different value of $\varepsilon_0$). 
We may assume without loss of generality (by possibly making $\varepsilon_0$ smaller) that $|B| \leq 1/2$.
Let $B_{\delta}$ denote the $\delta-$neighborhood of $B$ and let $\delta > 0$
be chosen so small that
$$ \frac{|B_{\delta}|}{|B|} \leq  1+ \frac{\varepsilon_0}{100}$$
and let $t_0 > 0$ be chosen so small that
$$ \inf_{z \in B} \int_{B_{\delta}}{ \left[e^{t_0 \Delta} \delta_z\right](x) dx} \geq 1-\frac{\varepsilon_0}{100}.$$
These two facts imply that for infinitely many $N$
$$  \int_{B_{\delta}}{ \left[e^{t_0 \Delta} \frac{1}{N} \sum_{m=1}^{N}{\delta_{x_m}} \right](x) dx} \geq \left(1-\frac{\varepsilon_0}{100} \right) (|B| + \varepsilon_0) \frac{1}{\vol(M)}.$$
This means that the average value satisfies
\begin{align*}
 \frac{1}{|B_{\delta}|}\int_{B_{\delta}}{ \left[e^{t_0 \Delta} \frac{1}{N} \sum_{m=1}^{N}{\delta_{x_m}} \right](x) dx} &\geq \left(1-\frac{\varepsilon_0}{100} \right) \frac{|B| + \varepsilon_0}{|B_{\delta}|}\frac{1}{\vol(M)} \\
&\geq \left(1-\frac{\varepsilon_0}{100} \right) \left( \frac{|B|}{|B_{\delta}|} + \frac{\varepsilon_0}{|B_{\delta}|} \right)\frac{1}{\vol(M)} \\
&\geq \left(1-\frac{\varepsilon_0}{100} \right) \left( 1 - \frac{\varepsilon_0}{100}+ \varepsilon_0 \right)\frac{1}{\vol(M)}\\
&\geq \left(1 + \frac{98 \varepsilon_0}{100} - \frac{99 \varepsilon_0^2}{10000}\right)\frac{1}{\vol(M)} > \frac{1}{\vol(M)}
\end{align*}
\textit{Step 3.} The Cauchy-Schwarz inequality implies, for general functions $f$  that
\begin{align*} |B_{\delta}| \left|  \frac{1}{|B_{\delta}|}  \int_{B_{\delta}}{f d x} - \frac{1}{\vol(M)} \right|  &=  \left| \int_{B_{\delta}}{\left(f-\frac{1}{\vol(M)}\right) ~dx} \right| \\
&\leq  \left(\int_{B_{\delta}}{\left(f-\frac{1}{\vol(M)}\right)^2 d x}\right)^{1/2} |B_{\delta}|^{1/2}
\end{align*}
and therefore there exists a constant $\varepsilon_1 > 0$ (depending only on $\varepsilon_0$ and $|B_{\delta}|$) such that
for infinitely many $N \in \mathbb{N}$
$$ \left\|   \left[e^{t_0 \Delta} \frac{1}{N} \sum_{m=1}^{N}{\delta_{x_m}} \right](x) - \frac{1}{\vol(M)} \right\|_{L^2(M)} \geq \varepsilon_1.$$

\textit{Step 4.} We will now prove that for any $t_0 > 0$, any $N \in \mathbb{N}$ and any set of points $x_0, x_1, \dots, x_N$
$$  \left\|  \nabla  e^{t_0 \Delta} \frac{1}{N} \sum_{m=1}^{N}{\delta_{x_m}}  \right\|_{L^2(M)} \lesssim_{t_0, (M,g)} 1,$$
where $ \lesssim_{t_0, (M,g)}$ denotes the existence of an implicit constant depending only on $t_0$ and $(M,g)$.
for some implicit constant that is both independent of $N$ and the actual set of points. It is easy to see that
\begin{align*}
 \left\|  \nabla e^{t_0 \Delta} \frac{1}{N} \sum_{m=1}^{N}{\delta_{x_m}}  \right\|_{L^{\infty}(M)} &=  \left\|  \frac{1}{N} \sum_{m=1}^{N}{  \nabla e^{t_0 \Delta} \delta_{x_m}}  \right\|_{L^{\infty}(M)} \\
&\leq  \frac{1}{N} \sum_{m=1}^{N}{  \left\|    \nabla e^{t_0 \Delta} \delta_{x_m} \right\|_{L^{\infty}(M)} }\\
&\leq \sup_{x \in M} {  \left\|    \nabla e^{t_0 \Delta} \delta_{x} \right\|_{L^{\infty}(M)}} \lesssim_{(M,g), t_0} 1,
\end{align*}
which follows from the regularity of the Green's function. By the same token
\begin{align*}
 1 =  \left\|  e^{t_0 \Delta} \frac{1}{N} \sum_{m=1}^{N}{\delta_{x_m}}  \right\|_{L^{1}(M)} &\leq \vol(M)^{1/2}  \left\|  e^{t_0 \Delta} \frac{1}{N} \sum_{m=1}^{N}{\delta_{x_m}}  \right\|_{L^{2}(M)}\\
&\lesssim_{(M,g)}   \left\|\frac{1}{N} \sum_{m=1}^{N}{   e^{t_0 \Delta}  \delta_{x_m}}  \right\|_{L^{2}(M)}\\
&\leq  \sup_{x \in M} {  \left\|  e^{t_0 \Delta} \delta_{x} \right\|_{L^{2}(M)}} \lesssim_{(M,g), t_0} 1.
 \end{align*}
We can now use the spectral theorem to write
$$ 1 \gtrsim_{(M,g), t_0}  \left\|  \nabla  e^{t_0 \Delta} \frac{1}{N} \sum_{m=1}^{N}{\delta_{x_m}}  \right\|^2_{L^2(M)} = \sum_{k=0}^{\infty}{\lambda_k \left|\left\langle e^{t_0 \Delta} \frac{1}{N} \sum_{m=1}^{N}{\delta_{x_m}}, \phi_k \right\rangle \right|^2 }.$$
However, at the same time, the eigenvalues of the Laplace-Beltrami operator are monotonically increasing and unbounded
$$0 < \lambda_1 < \lambda_2 \leq \dots \rightarrow \infty.$$ 
Weyl's law would give the asymptotic growth but that is not necessary. Recall that we have that for infinitely many $N \in \mathbb{N}$
$$ \left\|   \left[e^{t_0 \Delta} \frac{1}{N} \sum_{m=1}^{N}{\delta_{x_m}} \right](x) - \frac{1}{\vol(M)} \right\|_{L^2(M)} \geq \varepsilon_1.$$

 We can now argue that for $N_1 \geq 1$
\begin{align*}
\varepsilon_1^2 &\leq  \left\|   \left[e^{t_0 \Delta} \frac{1}{N} \sum_{m=1}^{N}{\delta_{x_m}} \right](x) - \frac{1}{\vol(M)} \right\|^2_{L^2(M)} \\
&=  \sum_{0 < \lambda_k \leq N_1}^{}{\left|\left\langle e^{t_0 \Delta} \frac{1}{N} \sum_{m=1}^{N}{\delta_{x_m}}, \phi_k \right\rangle \right|^2 }  +  \sum_{\lambda_k > N_1}^{}{\left|\left\langle e^{t_0 \Delta} \frac{1}{N} \sum_{m=1}^{N}{\delta_{x_m}}, \phi_k \right\rangle \right|^2 }  \\
&\leq \sum_{0 < \lambda_k \leq N_1}^{}{\left|\left\langle e^{t_0 \Delta} \frac{1}{N} \sum_{m=1}^{N}{\delta_{x_m}}, \phi_k \right\rangle \right|^2 }  + \frac{1}{N_1} \sum_{\lambda_k > N_1}^{}{\lambda_k \left|\left\langle e^{t_0 \Delta} \frac{1}{N} \sum_{m=1}^{N}{\delta_{x_m}}, \phi_k \right\rangle \right|^2 } \\
&\leq \sum_{0 < \lambda_k \leq N_1}^{}{\left|\left\langle e^{t_0 \Delta} \frac{1}{N} \sum_{m=1}^{N}{\delta_{x_m}}, \phi_k \right\rangle \right|^2 }  + \frac{1}{N_1}   \left\|  \nabla  e^{t_0 \Delta} \frac{1}{N} \sum_{m=1}^{N}{\delta_{x_m}}  \right\|^2_{L^2(M)}.
\end{align*}
Step 4 implies that this final gradient term is uniformly bounded for all $N$ and all $x_1, \dots, x_N \in \mathbb{T}$. This means, that there exists $N_1 \in \mathbb{N}$ and $\varepsilon_2 > 0$ depending only on $(M,g), t_0, \varepsilon_1$ such that, for infinitely many $N$,
$$  \sum_{0 < \lambda_k \leq N_1}^{}{\left|\left\langle e^{t_0 \Delta} \frac{1}{N} \sum_{m=1}^{N}{\delta_{x_m}}, \phi_k \right\rangle \right|^2 } \geq \varepsilon_2. $$
\textit{Step 5.} Using representation in Fourier series, we easily get that for all $s> t_0$
$$  \sum_{0 < \lambda_k \leq N_1}^{}{\left|\left\langle e^{s \Delta} \frac{1}{N} \sum_{m=1}^{N}{\delta_{x_m}}, \phi_k \right\rangle \right|^2 } \geq e^{-N_1^2 (s- t_0)} \varepsilon_2.$$
 We conclude by arguing that, for infinitely many $N \in \mathbb{N}$,
\begin{align*}
\left\langle  e^{2C \Delta}  \frac{1}{N}\sum_{m = 1}^{N}{ \delta_{x_m}},  \frac{1}{N}\sum_{m = 1}^{N}{ \delta_{x_m}} \right\rangle  &= \left\langle  e^{C \Delta}  \frac{1}{N}\sum_{m = 1}^{N}{ \delta_{x_m}}, e^{C \Delta}   \frac{1}{N}\sum_{m = 1}^{N}{ \delta_{x_m}} \right\rangle \\
&= \sum_{k=0}^{\infty}{ \left| \left\langle  e^{C \Delta}  \frac{1}{N}\sum_{m = 1}^{N}{ \delta_{x_m}}, \phi_k \right\rangle \right|^2} \\
&= \frac{1}{\vol(M)} + \sum_{k=1}^{\infty}{ \left| \left\langle  e^{C \Delta}  \frac{1}{N}\sum_{m = 1}^{N}{ \delta_{x_m}}, \phi_k \right\rangle \right|^2} \\
&\geq  \frac{1}{\vol(M)} + \sum_{k \leq N_0}^{}{ \left| \left\langle  e^{C \Delta}  \frac{1}{N}\sum_{m = 1}^{N}{ \delta_{x_m}}, \phi_k \right\rangle \right|^2} \\
&\geq  \frac{1}{\vol(M)} + e^{-N_1^2 (C - t_0)} \varepsilon_2.
\end{align*}
\end{proof}

\subsection{Proof of the Corollaries 3 and 4}
As for showing Corollaries 3 and 4, we cannot use the Gaussian because the definition of Poissonian Pair Correlation only gives pointwise control for a fixed $s$. However,
the same underlying idea can be used after replacing the Gaussian by a function $\phi:\mathbb{T} \rightarrow \mathbb{R}$ such that
\begin{enumerate}
\item $\phi$ is compactly supported around the origin and
\item $\widehat{\phi}(k) > 0$ for all $k \in \mathbb{Z}$
\end{enumerate}
Once having such a function, we may introduce, for any $t \geq 1$,
$$ \phi_t(x) = t \cdot \phi(tx).$$
Then 
$$ \int_{\mathbb{T}} \phi_t(x) dx = \mbox{constant in}~t = \int_{\mathbb{T}} \phi(x) dx.$$
Moreover, we have
\begin{align*}
  \sum_{m,n=1}^{N}{\phi_t(x_m - x_n)}  &= \left\langle \phi_t* \left( \sum_{k=1}^{N}{ \delta_{x_k}}  \right),  \sum_{k=1}^{N}{ \delta_{x_k}}  \right\rangle \\
&=  \sum_{\ell \in \mathbb{Z}}{ \widehat{\phi_t}(\ell)  \left| \sum_{k=1}^{N}{e^{-2 \pi i \ell x_k}} \right|^2} \\
&= \widehat{\phi_t}(0) N^2 + 2\sum_{\ell =1}^{\infty}{   \widehat{\phi_t}(\ell) \left| \sum_{k=1}^{N}{e^{2 \pi i \ell x_k}} \right|^2} \\
&= \left( \int_{\mathbb{T}} \phi(x) dx \right)  N^2  + 2\sum_{\ell =1}^{\infty}{   \widehat{\phi_t}(\ell) \left| \sum_{k=1}^{N}{e^{2 \pi i \ell x_k}} \right|^2}.
\end{align*}
At the same time, we can analyze the expression in a different way. Clearly,
$$  \sum_{m,n=1}^{N}{\phi_t(x_m - x_n)}  = N \cdot t \cdot \phi(0) +  \sum_{m,n=1 \atop m \neq n}^{N}{\phi_t(x_m - x_n)}.$$
Using the Poissonian Pair Correlation property, we have, for any fixed constant $c > 0$ and $t = cN$,
$$\lim_{N \rightarrow \infty} \frac{1}{N^2} \sum_{m,n = 1 \atop m \neq n}^{N}{\phi_t(x_m - x_n)} =  \int_{\mathbb{T}}{ \phi(x) dx}.$$
Combining all these inequalities, we see that for any $c>0$ and $t= cN$
$$ \limsup_{N \rightarrow \infty} \sum_{\ell =1}^{\infty}{   \widehat{\phi_t}(\ell) \left| \frac{1}{N} \sum_{k=1}^{N}{e^{2 \pi i \ell x_k}} \right|^2} \leq c \cdot \phi(0).$$
Therefore, we have, for any fixed $\ell \in \mathbb{Z}$ that
$$ \limsup_{N \rightarrow \infty} \widehat{\phi_t}(\ell) \left| \frac{1}{N} \sum_{k=1}^{N}{e^{2 \pi i \ell x_k}} \right|^2 \leq c \cdot \phi(0).$$
It is easy to see that, for $\ell$ fixed,
$$ \widehat{\phi_t}(\ell)  \rightarrow \int_{\mathbb{T}}{\phi(x) dx}$$
and we note that this quantity does not vanish since it equals $\widehat{\phi}(0) > 0$.
Since $c$ can be chosen arbitrarily small, equidistribution follows from Weyl's criterion. The same argument applies to the fractional case, it is
merely a different choice of $t$. This argument has other implications that have been developed elsewhere \cite{steinjnt}.

\section{Proof of the Corollary 5}
\begin{lem} Let $t > 0$ and consider $\theta_t:\mathbb{T} \rightarrow \mathbb{R}_{+}$ given by.
  $$ \theta_t(x) = 1 + 2 \sum_{n=1}^{\infty}{e^{-4 \pi^2 n^2 t}\cos{(2 \pi n x)}}.$$
If $\varepsilon > 0$ and
$$ x \geq 2\sqrt{\log{(2/\varepsilon)}} \sqrt{t},~ \mbox{then} \qquad \int_{-x}^{x}{\theta_t(y) dy} \geq 1 - \varepsilon.$$
\end{lem}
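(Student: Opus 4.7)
The plan is to exploit the Poisson summation identity for the Jacobi $\theta$--function, which recasts the spectral representation as a periodization of a Gaussian:
$$ \theta_t(y) \;=\; \frac{1}{\sqrt{4\pi t}} \sum_{k \in \mathbb{Z}} e^{-(y-k)^2/(4t)}. $$
This identity follows from applying the standard Poisson summation formula to $f(n)=e^{-4\pi^2 n^2 t}e^{2\pi i n y}$ together with the fact that the Fourier transform of a Gaussian is a Gaussian. Once this representation is in hand, the result becomes essentially a Gaussian tail bound.

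The strategy then proceeds in two short steps. First, since every summand in the periodized Gaussian is nonnegative, integrating from $-x$ to $x$ and keeping only the $k=0$ term yields the lower bound
$$ \int_{-x}^{x} \theta_t(y)\,dy \;\geq\; \frac{1}{\sqrt{4\pi t}} \int_{-x}^{x} e^{-y^2/(4t)}\,dy \;=\; 1 - \mathbb{P}(|Z|>x), $$
where $Z\sim\mathcal{N}(0,2t)$. Second, we apply the standard sub--Gaussian tail estimate $\mathbb{P}(|Z|>x) \leq 2\,e^{-x^2/(4t)}$; the hypothesis $x \geq 2\sqrt{\log(2/\varepsilon)}\sqrt{t}$ gives $x^2/(4t) \geq \log(2/\varepsilon)$, which forces $2e^{-x^2/(4t)} \leq \varepsilon$ and concludes the proof.

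There is essentially no obstacle; the only non--routine ingredient is the Poisson summation identity, which is the correct dictionary between the two natural representations of the heat kernel on $\mathbb{T}$. One minor bookkeeping point is that the statement is assumed to be interpreted on $[-1/2,1/2]$ (if $x\geq 1/2$ the left-hand side already equals $1$ and the conclusion is trivial), but this causes no difficulty since the periodized Gaussian tail of the $k\neq 0$ terms only enlarges the integral, and we have chosen to discard it entirely in the lower bound.
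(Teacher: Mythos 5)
Your proof is correct and follows essentially the same route as the paper: both reduce to the pointwise domination $\theta_t(y) \geq \frac{1}{\sqrt{4\pi t}}e^{-y^2/(4t)}$ (the paper via a brief ``topological'' comparison of the torus and real-line heat kernels, you via the explicit Poisson summation identity, which is the rigorous form of the same observation) and then finish with the standard Gaussian/Chernoff tail bound. The only difference is that you spell out the domination more explicitly, which is arguably cleaner.
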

\begin{proof} A simple topological argument allows to compare the heat kernel on the torus with the heat kernel on the real line: on the torus we have
the possibility of looping around which we do not have on the real line. Therefore, for all $x \in \mathbb{R}$ and all $t>0$
$$ 1 + 2 \sum_{n=1}^{\infty}{e^{-4 \pi^2 n^2 t}\cos{(2 \pi n x)}} \geq \frac{1}{\sqrt{4\pi t}} e^{-\frac{x^2}{4t}}.$$
The result then follows from the Chernoff bound
 $$ \int_{x}^{\infty}{ \frac{1}{\sqrt{2\pi }} e^{-\frac{y^2}{2}} dy} \leq e^{-\frac{x^2}{2}}.$$
\end{proof}

\begin{proof}[Proof of Corollary 5]
Let $x_1, \dots, x_N \in \mathbb{T}$ be given and assume that
$$ D_N(\left\{ x_1, \dots, x_N \right\}) = \varepsilon.$$
Then there exists an interval $J \subset \mathbb{T}$ such that
 $$ \left| \frac{  \# \left\{1 \leq m \leq N: x_m \in J\right\}}{N} - |J| \right| = \varepsilon.$$
 We distinguish two cases:
 $$  \frac{  \# \left\{1 \leq m \leq N: x_m \in J\right\}}{N} = |J| + \varepsilon \qquad \mbox{and} \qquad  \frac{  \# \left\{1 \leq m \leq N: x_m \in J\right\}}{N} = |J| - \varepsilon.$$
 We start with the first case, the second
case is essentially identical. 
We set 
$$ t= \frac{1}{100}\frac{\varepsilon^2}{\log{(20/\varepsilon)}}.$$
This choice guarantees, using the Lemma above, that
$$ \int_{-\varepsilon/4}^{\varepsilon/4}{\theta_t(y) dy} \geq 1- \frac{\varepsilon}{10}.$$
We will now consider the slightly larger interval $J^*$ given as the $\varepsilon/4-$neighborhood of $J$.
We see that, for infinitely many $N \in \mathbb{N}$,
\begin{align*}
 \frac{1}{|J^*|} \left\|    \left[e^{(t/2) \Delta} \frac{1}{N} \sum_{m=1}^{N}{\delta_{x_m}} \right](x) \right\|_{L^1(J^*)} &\geq \left(1-\frac{\varepsilon}{10}\right)  \frac{1}{|J^*|}\left\|    \frac{1}{N} \sum_{m=1}^{N}{\delta_{x_m}} \right\|_{L^1(J)} \\
 &\geq \left(1-\frac{\varepsilon}{10}\right)   \frac{1}{|J| + \varepsilon/2} \left\|    \frac{1}{N} \sum_{m=1}^{N}{\delta_{x_m}} \right\|_{L^1(J)}\\
&\geq  \left(1-\frac{\varepsilon}{10}\right)  \frac{|J| + \varepsilon}{|J| + \varepsilon/2}\\
&\geq  \left(1-\frac{\varepsilon}{10}\right)\frac{1+ \varepsilon}{1+ \varepsilon/2}\\
&\geq 1 + \frac{\varepsilon}{10}.
\end{align*}
We use the Cauchy-Schwarz inequality in the form
$$ \left( \int_{J^*}{ f(x) - 1~ dx} \right)^2 \leq  \left( \int_{J^*}{ (f(x) - 1)^2 dx} \right) |J^*|^{} \leq \left( \int_{\mathbb{T}}{ (f(x) - 1)^2 dx} \right) $$
to conclude that
$$  \left\|    \left[e^{(t/2) \Delta} \frac{1}{N} \sum_{m=1}^{N}{\delta_{x_m}} \right](x) -1 \right\|^2_{L^2(\mathbb{T})}  \geq \frac{\varepsilon^2}{100}.$$
\begin{center}
\begin{figure}[h!]
\begin{tikzpicture}[scale=0.8]
\draw [ultra thick] (0,0) -- (15,0);
\filldraw (1,0) circle (0.08cm);
\filldraw (3,0) circle (0.08cm);
\filldraw (6,0) circle (0.08cm);
\filldraw (8,0) circle (0.08cm);
\filldraw (7,0) circle (0.08cm);
\filldraw (9,0) circle (0.08cm);
\filldraw (13,0) circle (0.08cm);
\filldraw (14,0) circle (0.08cm);
 \draw[scale=1,thick, samples=200,domain=0:15,smooth,variable=\x]  plot ({\x},{3*exp(-7*(\x-6)*(\x-6))});
 \draw[scale=1,thick, samples=200,domain=0:15,smooth,variable=\x]  plot ({\x},{3*exp(-7*(\x-8)*(\x-8))});
 \draw[scale=1,thick, samples=200,domain=0:15,smooth,variable=\x]  plot ({\x},{3*exp(-7*(\x-7)*(\x-7))});
 \draw[scale=1,thick, samples=200,domain=0:15,smooth,variable=\x]  plot ({\x},{3*exp(-7*(\x-9)*(\x-9))});
\draw [ultra thick] (6,-0.2)--(6,0.2);
\draw [ultra thick] (9,-0.2)--(9,0.2);
\draw [ultra thick] (5.6,-0.2) to[out=130, in=230] (5.6,0.2);
\draw [ultra thick] (9.4,-0.2) to[out=50, in=310] (9.4,0.2);
\end{tikzpicture}
\caption{Slightly too many points in an interval (bounded by straight lines) implies slightly too much $L^1-$mass of the heat
kernel in a slightly larger interval (bounded by curved lines).}
\end{figure}
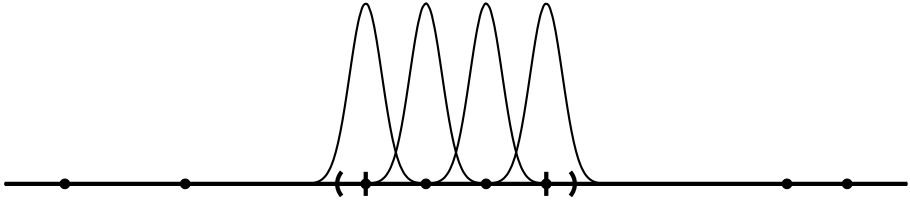
\end{center}

An explicit computation shows that
\begin{align*}
 \left\|    \left[e^{(t/2) \Delta} \frac{1}{N} \sum_{m=1}^{N}{\delta_{x_m}} \right](x) -1 \right\|^2_{L^2(\mathbb{T})} &= \int_{\mathbb{T}}{   \left[e^{(t/2) \Delta} \frac{1}{N} \sum_{m=1}^{N}{\delta_{x_m}} \right](x)^2 dx} -1 \\
&= \left\langle  e^{(t/2) \Delta} \frac{1}{N} \sum_{m=1}^{N}{\delta_{x_m}} ,  e^{(t/2) \Delta} \frac{1}{N} \sum_{m=1}^{N}{\delta_{x_m}}  \right\rangle - 1\\
&=  \left\langle  e^{t \Delta} \frac{1}{N} \sum_{m=1}^{N}{\delta_{x_m}} , \frac{1}{N} \sum_{m=1}^{N}{\delta_{x_m}}\right\rangle - 1\\
&=  -1 + \frac{1}{N^2} \sum_{m,n = 1}^{N}{\theta_{t}(x_n - x_m)} 
\end{align*}
and we can conclude the result. The other case of not enough points follows analogously except that $J^*$ is obtained from shrinking $J$.
\end{proof}

\textbf{Acknowledgment.} The author is grateful to Christoph Aistleitner for several discussions and to Jens Marklof for pointing out an error
in the original proof of Corollary 3 and 4.

\end{document}